\renewcommand\title[1]{\gdef\@title{\reset@font\Large\bfseries #1}}
\renewcommand\section{\@startsection {section}{1}{\z@}%
                                   {-3.5ex \@plus -1ex \@minus -.2ex}%
                                   {2.3ex \@plus.2ex}%
                                   {\normalfont\large\bfseries}}
\renewcommand\subsection{\@startsection{subsection}{2}{\z@}%
                                     {-3ex\@plus -1ex \@minus -.2ex}%
                                     {1.5ex \@plus .2ex}%
                                     {\normalfont\normalsize\bfseries}}
\renewcommand\subsubsection{\@startsection{subsubsection}{3}{\z@}%
                                     {-2.5ex\@plus -1ex \@minus -.2ex}%
                                     {1.5ex \@plus .2ex}%
                                     {\normalfont\normalsize\bfseries}}
\def\@runningauthor{}\newcommand{\runningauthor}[1]{\def\runningauthor{#1}}
\def\@runningtitle{}\newcommand{\runningtitle}[1]{\def\runningtitle{#1}}
\renewcommand{\ps@plain}{%
\renewcommand{\@evenhead}{\footnotesize\scshape \hfill\runningauthor\hfill}
\renewcommand{\@oddhead}{\footnotesize\scshape \hfill\runningtitle\hfill}}
\g@addto@macro\bfseries{\boldmath}
\theoremstyle{plain}
\newtheorem{theorem}{Theorem}
\newtheorem{lemma}[theorem]{Lemma}
\theoremstyle{definition}
\theoremstyle{remark}
\newcommand\F{{\mathbb F}}
\newcommand\N{{\mathbb N}}
\newcommand\Lm{\mathrm{Lm}}
\title{On the linear complexity for  multidimensional sequences}
\runningtitle{Linear complexity for multidimensional sequences}
\author{Domingo G\'omez-P\'erez\\
\small Department of Mathematics, Statistics and Computer Science\\[-0.8ex]
\small University of Cantabria\\[-0.8ex]
\small 39005 Santander, Spain\\
\small\tt domingo.gomez@unican.es\\
\and
Min Sha \\
\small Department of Computing\\[-0.8ex]
\small Macquarie University\\[-0.8ex]
\small Sydney, NSW 2109, Australia\\
\small\tt shamin2010@gmail.com \\
\and
Andrew Tirkel \\
\small Scientific Technology Pty Ltd.\\[-0.8ex]
\small 8 Cecil St, East Brighton\\[-0.8ex]
\small VIC 3187, Australia\\
\small\tt atirkel@bigpond.net.au
}
\runningauthor{D. G\'omez-P\'erez, M. Sha, A. Tirkel}
\date{}
\begin{document}

\maketitle

\thispagestyle{empty}

\begin{abstract}
In this paper, we define the linear complexity for multidimensional
sequences over finite fields, generalizing the one-dimensional case. 
We give some lower and upper bounds, valid with large probability, for the
linear complexity and $k$-error linear complexity of multidimensional
periodic sequences.
\end{abstract}

\quad Keywords: Multidimensional sequence,   linear complexity, $k$-error linear complexity



\section{Introduction}

One-dimensional periodic sequences with low auto- and
cross-correlations have extensive applications in modern communications.
Meanwhile, digital watermarking, which has been used to provide
copyright protection, certificates of authenticity, access control,
audit trail and many other security features, require 
multidimensional arrays (identified with multidimensional periodic sequences) with similar properties.
There are several constructions of these objects
proposed by Oscar Moreno, Andrew Tirkel et al.
\cite{BMT,MT0,MT1,TH}. 

Recently, in  \cite{GHMR}  the concept of linear
complexity of one-dimensional periodic sequences has been extended to
higher dimensions, and an efficient algorithm has been given. 
Moreover, the numerical results in \cite{GHMR} suggest that the
Moreno--Tirkel arrays \cite{MT1} have high linear complexity. 
This concept in fact is equivalent to ours for periodic sequences, which is explained later on. 

A cryptographically strong sequence should have a
high linear complexity, and it should also not be possible to
decrease significantly the linear complexity by changing a few terms of the sequence. 
This leads to the concept of $k$-error linear complexity defined by Stamp and Martin~\cite{SM},
which is based on the sphere complexity due to Ding, Xiao, and Shan~\cite{Ding}.
Note that, in practice, changes in the bitstream can occur due to noise,
multipath, or other distortion in the wireless channel.

In this paper, continuing previous work \cite{GHMR},  we define the
linear complexity for multidimensional sequences, including that of
multidimensional arrays as a particular example and introduce the
$k$-error linear complexity for such sequences.  We obtain 
some lower and upper bounds, valid with large probability, for the linear complexity and
$k$-error linear complexity of periodic sequences. 

The paper is organized as follows: Section~\ref{sec:basic} recalls
some basic definitions. 
The proofs of the main results
are based on some combinatorial analysis, which is included in Section \ref{sec:count}.
The main results are presented and proved in Section \ref{sec:lower}.

\section{Preliminaries}
\label{sec:basic}

\subsection{Multidimensional sequences}
\label{sec:seq}

Let $\N_0$ be the set of non-negative integers and $\F_q$ the finite
field of $q$ elements.
For any integer $n\ge 1$, an \textit{$n$-dimensional sequence} over $\F_q$ is a
mapping $s:$ $\N_0^n \to \F_q$. We write
$\pmb{m}=(m_1,\ldots,m_n)$ for the elements of $\N_0^n$, and the corresponding 
term in the sequence $s$ is denoted by $s(\pmb{m})$.
Further, let $\F_q[X_1,\ldots,X_n]$ be the polynomial ring in
variables $X_1,\ldots,X_n$ over $\F_q$.
A \textit{monomial} in this ring has the form
$$
\pmb{X^j} = X_1^{j_1} \ldots X_n^{j_n},
$$
where $\pmb{X}=(X_1,\ldots,X_n)$ and $\pmb{j}=(j_1,\ldots,j_n) \in \N_0^n$.

Let $\F_q[X_1,\ldots,X_n]$ act on the sequence $s$ as follows.
For any
$$
P(\pmb{X}) = \sum_{\pmb{j}} a_{\pmb{j}} \pmb{X^j} \in \F_q[X_1,\ldots,X_n],
$$
let $Ps$ be the $n$-dimensional sequence defined by 
$$
Ps(\pmb{m}) = \sum_{\pmb{j}} a_{\pmb{j}} s(\pmb{m}+\pmb{j}).
$$
We denote by $I(s)$ the set of polynomials $P\in \F_q[X_1,\ldots,X_n]$ for which
$Ps=0$. Clearly, each polynomial in $I(s)$ actually represents a
linear recurrence of $s$.
In fact, $I(s)$ is an ideal of the ring $\F_q[X_1,\ldots,X_n]$, so the quotient
$\F_q[X_1,\ldots,X_n]/I(s)$ is well defined (and is an $\F_q$-linear
space).
If the quotient space $\F_q[X_1,\ldots,X_n]/I(s)$ has finite dimension
(say $d$) over $\F_q$, we say that the sequence $s$ is an \textit{$n$-dimensional
  recurrence sequence of order $d$}.
We refer to the survey by Schmidt~\cite{Schmidt} for a general
introduction to this topic. When $n=1$, this definition recovers the
so-called \textit{linear recurrence sequence}; see the book by Everest
et al. \cite{EPSW} for an extensive introduction.
Moreover, for any ideal $I$, the quotient space
$\F_q[X_1,\ldots,X_n]/I$ has finite dimension over $\F_q$  if and only if
there is a non-zero polynomial in $I \cap \F_q[X_i]$ for each $i=1,\ldots,n$.

Particularly, the sequence $s$ is said to be \textit{periodic} if there is an $n$-tuple
 $(T_1,\ldots,T_n)$ of positive integers such that all the
binomials $X_1^{T_1}-1,\ldots,X_n^{T_n}-1$ belong to
$I(s)$, that is, the sequence is periodic in every  dimension.
Then, we call $(T_1,\ldots,T_n)$ a \textit{period} of $s$.
Periodic sequences of dimension two are called
  \textit{doubly-periodic sequences}, a largely studied object with applications in
algebraic coding theory~\cite{GH,Sakata0}.

An $n$-dimensional array $A$ of size $T_1 \times \cdots \times
T_n$ can be naturally extended to an $n$-dimensional sequence:
$$
s_A(m_1,\ldots,m_n) =
\textrm{$A(m_1 $\,mod $T_1,\ldots, m_n $\,mod  $T_n)$}.
$$
(Note that $(T_1,\ldots,T_n)$ is a period of $s_A$.)
Conversely, we can view every periodic sequence as the extension of an
array. Hence, we can identify multidimensional arrays with 
multidimensional periodic sequences.

The concept of multidimensional sequences we deal with
must not be confused with that of \textit{multisequences}, which
consists of finitely many parallel streams of one-dimensional
sequences \cite{MNV}.

\subsection{Linear complexity}
\label{sec:Lin com}

Recall that, in dimension one, the \textit{linear complexity} of
  a periodic sequence coincides with its order.
Similarly, we define the \textit{linear complexity} of a
multidimensional sequence $s$ to be its order (as defined above), denoted by $L(s)$.
So, the only sequence with linear complexity equal to zero is the zero sequence.
The \textit{linear complexity} of an $n$-dimensional array $A$ is
defined as the linear complexity of its periodic extension $s_A.$

A definition of linear complexity for multidimensional arrays (identified with periodic sequences) has been employed in \cite{GHMR} to test
the security of some multidimensional arrays 
proposed by Moreno and Tirkel \cite{MT1},
which is in fact equivalent to our definition.
We remark that the definition we give above is more formal and more
general than that in \cite{GHMR},
because it is a purely algebraic definition and it 
does not need to assume that the sequence is periodic. 
However, it is easier to design an algorithm
building on the definition in \cite{GHMR}: the cardinality of a
Delta set, as explained below.

Fix a monomial order $<_{\tau}$ on
$\F_q[X_1,\ldots, X_n]$.  The
maximum term of $f$ with respect to $<_{\tau}$ of a polynomial
$f$ is called the
\textit{leading term}, and the corresponding
monomial, denoted by $\Lm(f)$, is the \textit{leading monomial} of $f$.

A  \textit{Gr{\"o}bner basis} of a non-zero ideal $I$ of
$\F_q[X_1,\ldots,X_n]$ (with respect to $<_\tau$) is a
polynomial set $G(I)=\{g_1,\ldots, g_m\}$ which generate the ideal $I$
and  such that,
if $f\in I$, there is a polynomial in $G(I)$ whose leading monomial
divides $\Lm(f)$.
The basis $G(I)$ is said to be \textit{reduced} if, for each $i=1,\ldots, m$, the polynomial $g_i$ is monic
and its leading monomial does not divide any non-zero term of the other polynomials in $G(I)$.
It is well-known that there is exactly one reduced Gr{\"o}bner basis of $I$ with respect to
$<_{\tau}$.

Write 
$\Lm(I) = \{\Lm(f)\;: \; f\in I\}$, and
define the \textit{Delta set} of $I$ as
$$
\Delta(I) = \{\pmb{j}\in \N_0^n:\, \pmb{X^j} \not\in \Lm(I) \}.
$$
Note that $\pmb{X^j} \not\in \Lm(I)$ if and only if $\pmb{X^j}$
is not divisible by the leading monomial of any polynomial in $G(I)$.
The Delta
set depends on the chosen monomial order  $<_{\tau}$, but the cardinality of
the Delta set does not depend on $<_{\tau}$ and thus is an ideal invariant.
We refer to the book by Cox, Little, and O' Shea \cite{Cox} for
  more details.

Let $s$ be a periodic sequence with period
$(T_1,\ldots, T_n)$. Then,
the polynomials $X_1^{T_1}-1,\ldots, X_n^{T_n}-1$
are elements of the ideal $I(s)$, and
so its Delta set, denoted by $\Delta(s)$, is finite.
Moreover, the \textit{linear complexity} of the sequence satisfies 
\begin{equation}    \label{eq:LD}
L(s) = | \Delta(s)|\le T_1T_2 \cdots T_n.
\end{equation}
Furthermore, $L(s)$ is the minimum number of initial terms
which generate the whole sequence through the linear recurrences
represented by the polynomials in $I(s)$
(or equivalently, the polynomials in $G(s)$).
Such initial terms are exactly:
$$
s(\pmb{j}), \quad \pmb{j} \in \Delta(s).
$$

\subsection{Remarks on linear complexity}
\label{sub:2_3}
In the literature, there is another definition of the 
linear complexity for two dimensional binary finite sequences,
which is proposed by Gyarmati, Mauduit, and Sárközy \cite[Definition 5]{GMS1} 
and is equal to the minimal number
of initial terms generating the whole sequence by a specific linear recurrence. 
We opt instead to define the linear complexity of a
finite multidimensional sequence as that of its periodic extension. 
Note that a linear recurrence of a finite multidimensional sequence may be 
not a linear recurrence of its periodic extension.

Besides, if
$T_1,\ldots,T_n$ are pairwise coprime, any $n$-dimensional sequence
$s$ of period $(T_1,\ldots,T_n)$ can be converted 
into a one-dimensional sequence $t$ of period $T_1T_2\cdots T_n$ by
$$
t(m) = \textrm{$s(m$ mod $T_1, \ldots, m$ mod $T_n)$}, \quad m \ge 0.
$$
Since $T_1,\ldots,T_n$ are pairwise coprime, there is a one-to-one correspondence between
the terms of $s$ and $t$. So, the shortest linear recurrence generating $t$ can be converted into
a linear recurrence which  generates $s$. Hence,
$$
L(s) \le L(t).
$$

\subsection{The $k$-error linear complexity}

Let $s$ be an $n$-dimensional periodic sequence  with period
$(T_1,\ldots,T_n)$. 
Given an integer $k\le T_1\cdots T_n$, 
the \textit{$k$-error linear complexity} $L_k(s)$ of $s$ is the
smallest linear complexity among those sequences which differ from $s$
in $k$ or fewer terms from a period:
$$
\{(m_1,\ldots,m_n): \,  0 \le m_i \le T_i-1, 1\le i \le n\}.
$$
It follows from the definition that
$$
0=L_{T_1\cdots T_n}(s) \le \cdots \le L_2(s) \le L_1(s) \le
L_0(s)=L(s) \le T_1\cdots T_n.
$$


\section{Some counting results}
\label{sec:count}

In this section, we establish a couple of results regarding the amount
of monomial ideals in a polynomial ring and the size of a reduced Gr{\" o}bner
basis, which are used later on.
Here, $\F$  stands for an arbitrary field.

Recall that an ideal of the polynomial ring $\F[X_1,\ldots,X_n]$ is  a
\textit{monomial ideal} if it is generated by monomials.
A monomial lies in an ideal generated by some monomials
if and only if it is divisible by one of
them. Besides, a polynomial lies in a monomial ideal if
and only if all its monomials do; see \cite[Chapter 2, \S 4]{Cox}.

\begin{lemma}  \label{lem:monomial}
For any positive integer $K \ge 1$,
let $M_n(K)$ be  the set of monomial ideals $I$ in
$\F[X_1,\ldots,X_n]$ such that the dimension of the quotient space
$\F[X_1,\ldots,X_n] / I$ over $\F$  equals $K$. Then, 
$|M_n(1)|=1$, and for any $K\ge 2$, 
$$
|M_n(K)| \le K^n(K+n-2)^{(n-1)(K-1)} (2K-3)^{K-2}.
$$
In particular, for any real number $R >1$, there exists a
constant $c$ depending on $n$ and $R$ such that for any $K \ge 1$,
$$
|M_n(K)| \le cR^{K^{3/2}}.
$$
\end{lemma}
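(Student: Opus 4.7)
The plan is to reinterpret $M_n(K)$ combinatorially. Any monomial ideal $I$ in $\F[X_1,\ldots,X_n]$ is determined by its complement in the set of monomials, because a monomial lies in $I$ if and only if it is divisible by a generator. Hence the set $\Delta(I)=\{\pmb{j}\in\N_0^n:\pmb{X^j}\notin I\}$ is a \emph{down-set} (closed under divisibility), the residues $\pmb{X^j}$ for $\pmb{j}\in\Delta(I)$ form an $\F$-basis of $\F[X_1,\ldots,X_n]/I$, and $I\leftrightarrow\Delta(I)$ is a bijection. Thus $|M_n(K)|$ equals the number of down-sets in $\N_0^n$ of size $K$. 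The down-set property then forces every $\pmb{j}\in\Delta(I)$ to have each coordinate in $\{0,\ldots,K-1\}$, since otherwise the axial chain below $\pmb{j}$ would already contribute more than $K$ elements to $\Delta(I)$. The case $K=1$ is then immediate, since the only down-set is $\{(0,\ldots,0)\}$.

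For $K\ge 2$ I would encode each down-set $\Delta$ in three stages, matching the three factors of the claimed bound. \emph{Stage one}: record $T_j=\max\{m_j:\pmb{m}\in\Delta\}+1\in\{1,\ldots,K\}$ for $j=1,\ldots,n$; there are at most $K^n$ such tuples $(T_1,\ldots,T_n)$, yielding the first factor, and $\Delta$ is constrained to the box $\prod_j\{0,\ldots,T_j-1\}$. \emph{Stage two}: fix a linear extension of the divisibility order on $\Delta$ (say graded lex) and list $\Delta=\{\pmb{m}_1,\ldots,\pmb{m}_K\}$ with $\pmb{m}_1=(0,\ldots,0)$; each $\pmb{m}_i$ is then forced to be an outer corner of $\{\pmb{m}_1,\ldots,\pmb{m}_{i-1}\}$, meaning that $\pmb{m}_i$ minus the $k$-th unit vector already appears in the list whenever its $k$-th coordinate is positive. \emph{Stage three}: describe each $\pmb{m}_i$ (for $2\le i\le K$) coordinate by coordinate, showing that its first $n-1$ coordinates each admit at most $K+n-2$ admissible values (yielding $(K+n-2)^{(n-1)(K-1)}$) and that its last coordinate admits at most $2K-3$ admissible values on all but one distinguished step (yielding $(2K-3)^{K-2}$). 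Distinct down-sets produce distinct ordered encodings because the list $(\pmb{m}_2,\ldots,\pmb{m}_K)$ recovers $\Delta$, so the product of the three counts is an upper bound on $|M_n(K)|$.

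The main obstacle is precisely the bookkeeping in stages two and three: one must verify that the outer-corner constraint, combined with the ambient bound $K-1$ on each coordinate, really restricts each new coordinate to the stated ranges, and that the mild slack producing $K+n-2$ in place of $K$, and $2K-3$ in place of $K-1$, correctly absorbs the boundary cases where the new coordinate equals or slightly exceeds the running maximum. This relies on the characterisation of outer corners of a down-set $\Delta'$ as those points $\pmb{m}$ with $\pmb{m}-e_k\in\Delta'$ whenever $m_k\ge 1$, together with the observation that at step $i$ only $i-1<K$ points are already placed.

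For the closing ``in particular'' statement I take logarithms:
\[
\log|M_n(K)|\le n\log K+(n-1)(K-1)\log(K+n-2)+(K-2)\log(2K-3),
\]
which for fixed $n$ is $O_n(K\log K)$. Since $K\log K=o(K^{3/2})$, for any $R>1$ one has $\log|M_n(K)|\le K^{3/2}\log R$ once $K$ is sufficiently large, whence $|M_n(K)|\le R^{K^{3/2}}$ for all but finitely many $K$. Absorbing the small values of $K$ into a sufficiently large constant $c$ yields the claimed bound $|M_n(K)|\le cR^{K^{3/2}}$.
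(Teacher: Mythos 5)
Your reduction to counting down-sets of size $K$ in $\N_0^n$ is exactly the paper's starting point (there phrased via the monomial basis $B(I)$ of the quotient), and your stage one coincides with the paper's set $S_K$ of exponent tuples $(m_1,\ldots,m_n)$, likewise bounded by $K^n$. Where you genuinely diverge is the core count. The paper stratifies the remaining $K-1$ basis monomials by total degree $d\in\{1,\ldots,D\}$ with $D=m_1+\cdots+m_n-n\le K-1$, bounds the choices in degree $d$ by $\binom{N_d}{k_d}\le N_d^{k_d}\le (n+d-1)^{k_d(n-1)}$ with $N_d=\binom{n+d-1}{n-1}$, and pays $\binom{D+K-2}{D-1}\le (2K-3)^{K-2}$ for the composition $k_1+\cdots+k_D=K-1$; there the factor $(K+n-2)^{(n-1)(K-1)}$ has the concrete meaning of counting monomials of degree at most $n+D-1\le K+n-2$, and $(2K-3)^{K-2}$ counts degree profiles. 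Your sequential outer-corner encoding (listing the down-set in graded lex order and bounding the choices at each step) is a different decomposition, and its injectivity argument is fine: the sorted list determines $\Delta$, and graded lex refines divisibility, so each prefix is a down-set and each new point is an outer corner.

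However, as written your proof stops exactly where you flag the ``main obstacle'': you assert, but never verify, that the first $n-1$ coordinates of each new point admit at most $K+n-2$ values and the last at most $2K-3$ values off one distinguished step. The good news is that this closes trivially, with no outer-corner bookkeeping at all: by your own axial-chain remark every coordinate of every point of $\Delta$ lies in $\{0,\ldots,K-1\}$, so each coordinate admits at most $K$ values, and $K\le K+n-2$ for $n\ge 2$ while $K\le 2K-3$ for $K\ge 3$; the distinguished step is $i=2$, where $\pmb{m}_2$ must be a unit vector, giving $n\le (K+n-2)^{n-1}$ choices, and the cases $n=1$ and $K=2$ are checked directly. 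In fact the cruder count $|M_n(K)|\le\binom{K^n-1}{K-1}\le K^{n(K-1)}$ already implies the displayed bound, since $K^{n(K-1)}\le K^n(K+n-2)^{(n-1)(K-1)}(2K-3)^{K-2}$ for all $K\ge 2$ --- which shows your target factorization is reverse-engineered slack rather than a structural necessity, whereas in the paper each factor arises with combinatorial meaning. Your derivation of the ``in particular'' clause is correct and matches the paper's: both amount to $\log|M_n(K)|=O_n(K\log K)=o(K^{3/2})$, absorbing finitely many $K$ into the constant $c$.
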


\begin{proof}
Since the set $M_n(1)$ only contains the ideal generated by $X_1,\ldots, X_n$, we indeed have $|M_n(1)|=1$. 
In the following, we assume that $K \ge 2$. 

Given a monomial ideal $I \in M_n(K)$, 
we must have that  for each $i=1,\ldots, n$, there exists an integer  $m_i \ge 1$ such that $X_i^{m_i} \in I$ and $X_i^{m_i-1} \not\in I$, 
and these integers satisfy 
\begin{equation}  \label{eq:expo}
m_1+\cdots + m_n + 1 - n \le K \le m_1m_2 \cdots m_n, 
\end{equation}
where the lower bound is exactly the number of
polynomials of the form $X_i^{j}$, $i=1,\ldots, n$
and $j=0,\ldots, m_i-1$. Clearly, $m_i \le K$ for each $i=1,\ldots,n$.

Denote by $S_K$ the set of $n$-tuples $\pmb{m}=(m_1,\ldots,m_n)$ of positive
integers satisfying the condition \eqref{eq:expo}.
Obviously, we have 
\begin{equation}  \label{eq:SK}
|S_K| \le K^n.  
\end{equation}
For each tuple $\pmb{m} \in S_K$, let $M_{n,\pmb{m}}(K)$ be the set of monomial ideals $I \in M_n(K)$ such that 
$X_i^{m_i} \in I$ and $X_i^{m_i-1} \not\in I$ for each $i=1,\ldots, n$. 
Then, we have 
\begin{equation}   \label{eq:MN}
M_n(K) = \bigcup_{\pmb{m} \in S_K}  M_{n,\pmb{m}}(K). 
\end{equation}
So, it suffices to estimate the size of each set $M_{n,\pmb{m}}(K), \pmb{m} \in S_K$. 

Now, fixing an $n$-tuple $\pmb{m}=(m_1,\ldots,m_n) \in S_K$, we want to estimate $|M_{n,\pmb{m}}(K)|$.
For each monomial ideal $I \in M_{n,\pmb{m}}(K)$, let $B(I)$ be the \textit{monomial basis} of the quotient space $\F[X_1,\ldots,X_n] / I$ over $\F$. 
That is, $B(I)$ is the set of monomials not contained in $I$, and $|B(I)|=K$. 
Note that every element in $B(I)$ is of the form $X_1^{j_1}X_2^{j_2}\cdots X_n^{j_n}$ 
with $0 \le j_i \le m_i-1$ for each $i=1,\ldots, n$. 

Since each monomial ideal $I \in M_{n,\pmb{m}}(K)$ is uniquely determined by $B(I)$, it is equivalent to estimate the possibilities of $B(I)$. 
Denote 
$$
D=m_1 + \cdots + m_n -n. 
$$ 
Since $K \ge 2$, we must have $D \ge 1$.   
By assumption, the monomials in $B(I)$ are of degree between 0 and $D$, and $|B(I)| = K$. 
Noticing $1 \in B(I)$, to obtain a possible choice for $B(I)$ we need to choose $k_i$ monomials of degree $i$ ($k_i\ge 0$) for each $i=1,2,\ldots,D$ 
such that 
$$
k_1 + k_2 + \cdots + k_D=|B(I)|-1=K-1.
$$ 
So, we obtain 
\begin{equation}   \label{eq:Mnm1}
|M_{n,\pmb{m}}(K)| \le \sum_{k_1 + \cdots + k_D=K-1} \prod_{d=1}^{D} \binom{N_d}{k_d}, 
\end{equation}
where $N_d$ is the number of monomials in $\F[X_1,\ldots,X_n]$ of degree $d$.  

It is well-known that
$$
N_d=\binom{n+d-1}{n-1}.
$$
Then, \eqref{eq:Mnm1} becomes
\begin{equation*}    \label{eq:Mnm2}
\begin{split}
|M_{n,\pmb{m}}(K)|  & \le \sum_{k_1 + \cdots + k_D=K-1} \prod_{d=1}^{D} N_d^{k_d}  \\
& \le \sum_{k_1 + \cdots + k_D=K-1} \prod_{d=1}^{D} (n+d-1)^{k_d(n-1)} \\
& \le (n+D-1)^{(n-1)(K-1)} \sum_{k_1 + \cdots + k_D=K-1} 1 \\
& =  (n+D-1)^{(n-1)(K-1)} \binom{D+K-2}{D-1} \\
& \le  (n+D-1)^{(n-1)(K-1)} (D+K-2)^{D-1}. 
\end{split}
\end{equation*}
By the definition of $D$ and \eqref{eq:expo},  we have $D \le K-1$, implying
\begin{equation*}
|M_{n,\pmb{m}}(K)| \le (K+n-2)^{(n-1)(K-1)} (2K-3)^{K-2}, 
\end{equation*}
which, together with  \eqref{eq:SK} and  \eqref{eq:MN}, gives the first part of the desired result. 

Finally, we want to get a simple upper bound for $|M_n(K)|$. 
We first have 
\begin{equation*}
|M_{n}(K)| \le K^n(K+n-2)^{(n-1)(K-1)} (2K-3)^{K-2} \le c_n^K K^{nK+n}, 
\end{equation*}
where $c_n$ is some constant depending only on $n$. 
Note that given a real number $R >1$ ($n$ is fixed), for any sufficiently large integer $K$ we have 
\begin{equation*}
c_n^K K^{nK+n} \le R^{K^{3/2}}. 
\end{equation*}
Hence, there exists a constant $c$ depending only on $n,R$ such that for any $K \ge 1$, 
$$
|M_n(K)| \le c_n^K K^{nK+n} \le cR^{K^{3/2}}.
$$
This completes the proof. 
\end{proof}

The estimate in the previous lemma might be not tight, but is sufficient for our purpose.

\begin{lemma}   \label{lem:coeff}
Let $I$ be a non-trivial ideal of $\F[X_1,\ldots,X_n]$ and
$G(I)$ be its reduced Gr{\"o}bner basis  with respect to the graded
lexicographic order.
If  the dimension of the quotient space
$\F[X_1,\ldots,X_n]/I$ over $\F$ equals
$K$,
then
$$
|G(I)| \le (n-1)K + 1,
$$
and the equality holds if and only if $K=1$ or $n=1$.
\end{lemma}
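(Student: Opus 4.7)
My first step is to reduce from a general ideal to a monomial ideal. Since $G(I)$ is reduced, its elements have pairwise distinct leading monomials, and these leading monomials are precisely the minimal monomial generators of the leading-term ideal $\Lm(I)$; moreover, $K = \dim_\F \F[X_1,\ldots,X_n]/I$ equals $|\Delta(I)| = |\Delta(\Lm(I))|$. Thus it suffices to prove the following combinatorial claim: for any proper monomial ideal $J \subsetneq \F[X_1,\ldots,X_n]$ with $|\Delta(J)| = K$, the set $\mu(J)$ of minimal monomial generators satisfies $|\mu(J)| \le (n-1)K + 1$, with equality iff $K = 1$ or $n = 1$.

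I will prove the claim by induction on $n$. The base case $n = 1$ is immediate: the only such $J$ is $(X_1^K)$, giving $|\mu(J)| = 1 = 0\cdot K + 1$. For the inductive step ($n \ge 2$), I partition $\mu(J)$ by the exponent of $X_n$, setting $\mu_a := \{\pmb{X^j} \in \mu(J) : j_n = a\}$. For each $a \ge 1$, the map $\pmb{X^j} \mapsto \pmb{X^j}/X_n$ is an injection from $\mu_a$ into $\{\pmb{k} \in \Delta(J) : k_n = a-1\}$, since minimality of $\pmb{X^j}$ forces $\pmb{X^j}/X_n \notin J$; summing over $a \ge 1$ yields $\sum_{a \ge 1} |\mu_a| \le |\Delta(J)| = K$. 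For $a = 0$, a short check shows $\mu_0$ coincides with the set of minimal generators of the contracted ideal $J' := J \cap \F[X_1,\ldots,X_{n-1}]$, whose Delta set is $\Delta_0 := \{\pmb{k} \in \Delta(J) : k_n = 0\}$ with $1 \le |\Delta_0| \le K$; the inductive hypothesis then gives $|\mu_0| \le (n-2)|\Delta_0| + 1$. Combining the two bounds yields $|\mu(J)| \le (n-2)|\Delta_0| + 1 + K \le (n-1)K + 1$.

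For the equality characterization, the cases $K = 1$ (where $J = (X_1,\ldots,X_n)$, so $|\mu(J)| = n$) and $n = 1$ both give equality directly. For the converse, suppose $|\mu(J)| = (n-1)K + 1$ with $n \ge 2$. Tightness of the second inequality above forces $|\Delta_0| = K$, i.e.\ every tuple in $\Delta(J)$ has last coordinate zero; in particular $X_n \in J$, and hence $X_n \in \mu(J)$. Tightness of the first inequality then demands $|\mu_1| = |\Delta_0| = K$. But any element of $\mu_1$ other than $X_n$ is a proper multiple of the minimal generator $X_n$, contradicting minimality; hence $|\mu_1| = 1$, forcing $K = 1$.

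I expect the main technical subtlety to be the equality case, where one must carefully track which of the chained inequalities are tight and then exploit the collapse of $\mu_1$ to a single element once $X_n \in J$. The inductive bound itself is routine once the partition-by-$X_n$-exponent idea is set up, and the argument in fact works for any monomial order, not just the graded lexicographic one.
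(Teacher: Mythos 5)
Your proof of the inequality itself is correct and takes a genuinely different route from the paper's. After the same reduction to monomial ideals, you induct on the number of variables $n$, partitioning the minimal generators by their $X_n$-exponent, injecting those with positive exponent into $\Delta(J)$ via division by $X_n$, and identifying $\mu_0$ with the minimal generators of the contraction $J\cap\F[X_1,\ldots,X_{n-1}]$ (all of these steps check out, including the applicability of the inductive hypothesis, since finite colength forces a pure power of each $X_i$ into $J$, so $J'$ is a nonzero proper ideal of colength $|\Delta_0|$ with $1\le|\Delta_0|\le K$). The paper instead fixes $n$ and inducts on $K$: it adjoins the maximal standard monomial $\pmb{X^j}$ (with respect to graded lex) to pass from colength $K$ to colength $K-1$, losing at most $n-1$ elements of the reduced basis, with the base case $K=2$ checked by hand. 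Your decomposition buys a refined intermediate bound $|\mu(J)|\le(n-2)|\Delta_0|+1+K$ and makes the order-independence of the bound explicit; the paper's induction on $K$ yields strict inequality for all $n\ge2$, $K\ge2$ directly, so its equality case is automatic.

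The equality characterization, however, has a genuine gap at $n=2$. You claim that tightness of the second inequality, $(n-2)|\Delta_0|+1+K\le(n-1)K+1$, ``forces $|\Delta_0|=K$''; but when $n=2$ the coefficient $n-2$ vanishes, that inequality is an identity for every value of $|\Delta_0|$, and the subsequent deductions (every tuple of $\Delta(J)$ has last coordinate zero, hence $X_n\in J$) are unsupported. Since the lemma asserts strict inequality for $n=2$, $K\ge2$, this case must be covered, and as written your argument does not rule out, say, $|\mu(J)|=K+1$ for a planar staircase ideal without further work. The fix stays inside your framework and in fact streamlines the argument for all $n\ge2$: tightness of the \emph{first} inequality $\sum_{a\ge1}|\mu_a|\le K$ means your injection $\pmb{X^j}\mapsto\pmb{X^j}/X_n$ is a bijection onto $\Delta(J)$; since $J$ is proper, $1\in\Delta(J)$ must be hit, which forces $X_n\in\mu(J)$. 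Then any monomial $mX_n^a$ with $a\ge1$ other than $X_n$ itself is a proper multiple of the generator $X_n$ and so is not minimal, giving $\sum_{a\ge1}|\mu_a|=1$, hence $K=1$ --- with no appeal to $|\Delta_0|=K$ and no case split on $n$.
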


\begin{proof} 
First, note that the dimension of the quotient space $\F[X_1,\ldots,X_n]/I$ over $\F$ 
equals that of the quotient by the ideal generated by the leading monomials of
the polynomials in $G(I)$. 
So, without loss of generality we can assume that $I$ is a monomial ideal. 
Then, $G(I)$ is a set of monomials. 

Clearly, the equality holds when $n=1$. 
For $K=1$, we have $G(I) =  \{X_1,\ldots, X_n\}$, and so the inequality is indeed an equality. 
For fixed $n \ge 2$, the assertion 
$$
|G(I)| < (n-1)K + 1
$$ 
is proven by induction on $K \ge 2$.

For $K=2$,  the monomial basis for $\F[X_1,\ldots,X_n]/I$ over $\F$ is
$ \{1,X_i\}$ and $G(I)= \{X_1,\ldots, X_{i-1},X_{i}^2,X_{i+1},\ldots,
X_n\}$ for some $1\le i \le n$. So, 
$$
n = |G(I)| < 2(n-1) + 1 = 2n-1.
$$

Now, for general $K\ge 3$, take the maximal element (with respect to the graded lexicographical order)
 of the monomial basis of the quotient space $\F[X_1,\ldots,X_n]/I$ over $\F$: 
$$
\pmb{X^j} = X_1^{j_1} \ldots X_n^{j_n}.
$$
Consider the monomial ideal $J$ generated by $\pmb{X^j}$ and the monomials in $G(I)$. 
Note that $J$ is also a monomial ideal. 
First, for the reduced Gr{\"o}bner basis $G(J)$ of $J$,  noticing the choice of $\pmb{X^j}$  we have 
$$
G(J) = \{\pmb{X^j}\} \cup \big(G(I) \setminus \{X_1\pmb{X^j},\ldots, X_n\pmb{X^j}\} \big), 
$$
which implies that  
\begin{equation}   \label{eq:GIJ}
|G(J)| \ge |G(I)| -n + 1.
\end{equation}
Besides, the dimension of the quotient space $\F[X_1,\ldots,X_n]/J$ over $\F$ 
is $K-1$, because its monomial basis is obtained from that of $\F[X_1,\ldots,X_n]/I$
 by removing $\pmb{X^j}$.

Then, by induction hypothesis, we have 
$$
|G(J)| < (n-1)(K-1) +1, 
$$ 
which, together with \eqref{eq:GIJ}, gives 
\begin{equation*}
|G(I)| \le |G(J)| + n -1 < (n-1)(K-1) + n = (n-1)K + 1. 
\end{equation*}
This finishes the proof.
\end{proof}

\section{Lower and upper bounds for the linear complexity}
\label{sec:lower}

In this section, some  bounds for the linear complexity are presented,
which are analogues of the results in \cite[Theorems 1, 2 and 3]{GMS2}.

\subsection{Upper bounds}

The following result~\cite[Theorem 2]{Hoeffding} gives a dispersion
measure for the arithmetic  mean of equidistributed
random variables.

\begin{lemma}[Hoeffding's inequality]
  \label{lemma:hoefding}
  Let $X_1,\ldots, X_{d}$ be $d$ independent random variables
  with the same probability distribution, each ranging over the real
  interval $[a,b]$, and let $\mu$ be the expected value of each of
  these random variables. Then, for any $\epsilon>0$, the probability
  \begin{equation*}
    \mathcal{P} \left(
          \sum_{i=1}^{d}X_i \ge d(\mu + \epsilon) \right)
    \le \exp\Big(\frac{-2d\epsilon^2}{(b-a)^2}\Big).
  \end{equation*}
\end{lemma}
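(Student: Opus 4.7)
The plan is to follow the classical Chernoff--bound argument supplemented by Hoeffding's lemma for bounded random variables. First, I would center the variables by setting $Y_i = X_i - \mu$, so that each $Y_i$ is independent of the others, has mean zero, and takes values in an interval of length $b-a$. The target event becomes $\sum_{i=1}^{d} Y_i \geq d\epsilon$.

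Next, for any $t > 0$, I apply the exponential Markov inequality together with independence to get
\begin{equation*}
\mathcal{P}\!\left(\sum_{i=1}^{d} Y_i \geq d\epsilon\right) \leq e^{-td\epsilon} \prod_{i=1}^{d} E\bigl[e^{tY_i}\bigr] = e^{-td\epsilon}\, \bigl(E[e^{tY_1}]\bigr)^{d}.
\end{equation*}

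The heart of the argument, and the main obstacle, is the following auxiliary estimate (Hoeffding's lemma): for a zero-mean random variable $Y$ supported in $[\alpha,\beta]$, one has $E[e^{tY}] \leq \exp\bigl(t^2(\beta-\alpha)^2/8\bigr)$. I would establish this by using the convexity of $x \mapsto e^{tx}$ to bound $e^{tY}$ pointwise by its linear interpolant between $\alpha$ and $\beta$; taking expectations (which annihilates the linear part since $E[Y]=0$) reduces the problem to showing that the function $\psi(h) = -hp + \log\bigl((1-p) + p\, e^{h}\bigr)$, with $p = -\alpha/(\beta-\alpha)$ and $h = t(\beta-\alpha)$, satisfies $\psi(h) \leq h^2/8$. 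A direct computation gives $\psi(0) = \psi'(0) = 0$ together with $\psi''(h) \leq 1/4$ for all $h$ (essentially by the AM--GM inequality applied to the two terms in the denominator of the expression for $\psi''$), so Taylor's theorem closes this step.

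Combining the two ingredients yields
\begin{equation*}
\mathcal{P}\!\left(\sum_{i=1}^{d} Y_i \geq d\epsilon\right) \leq \exp\!\left(-td\epsilon + \frac{dt^{2}(b-a)^{2}}{8}\right).
\end{equation*}
The remaining step is to optimise in $t > 0$: the quadratic in $t$ inside the exponent is minimised at $t = 4\epsilon/(b-a)^{2}$, and substituting this value produces exactly the exponent $-2d\epsilon^{2}/(b-a)^{2}$ appearing in the statement. Every step other than Hoeffding's lemma is essentially mechanical; the real content lies in the second--derivative bound for $\psi$.
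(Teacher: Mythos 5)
Your proof is correct in all its steps: the centering, the exponential Markov (Chernoff) bound, Hoeffding's lemma with the function $\psi(h)=-hp+\log\bigl((1-p)+p\,e^{h}\bigr)$ satisfying $\psi(0)=\psi'(0)=0$ and $\psi''(h)=u(1-u)\le 1/4$ with $u=pe^{h}/((1-p)+pe^{h})$, and the optimisation at $t=4\epsilon/(b-a)^{2}$ all check out. Note, however, that the paper does not prove this lemma at all --- it imports it verbatim as Theorem~2 of Hoeffding's 1963 paper --- so there is no internal proof to compare against; your argument is the classical one (essentially Hoeffding's original moment-generating-function proof), and it is a complete and valid derivation of the stated bound.
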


Using this lemma, we derive an upper bound, valid with large probability,
on the $k$-error linear complexity of periodic sequences.

\begin{theorem}
Let $k$ be a non-negative integer. 
  Let $\mu$ be the expected value of the linear complexity of a
  $T_1$-periodic sequence under the uniform probability distribution.
  Then, for any $\epsilon>0$, 
    choosing each periodic sequence of period $(T_1,\ldots,T_n)$ $s:
  \N_0^n \to \F_q$ with equal probability $1/q^{T_1\cdots T_n}$, we have
  the probability 
   \begin{equation*}
    \mathcal{P} \Big(  L_k(s) <  (\mu+\epsilon) T_2\cdots T_n \Big) >
    1- \exp \left(-2\epsilon^2T_2\cdots T_n /T_1^2\right), 
  \end{equation*}
\end{theorem}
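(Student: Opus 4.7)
The exponent $\exp(-2\epsilon^{2}T_{2}\cdots T_{n}/T_{1}^{2})$ in the failure probability is exactly the bound that Lemma~\ref{lemma:hoefding} produces for a sum of $d = T_{2}\cdots T_{n}$ i.i.d.\ random variables valued in $[0, T_{1}]$ with common mean $\mu$, which essentially dictates the strategy. The plan is to slice $s$ along the first coordinate: for each $\pmb{m}' = (m_{2}, \ldots, m_{n}) \in [0, T_{2}) \times \cdots \times [0, T_{n})$, define the $T_{1}$-periodic one-dimensional sequence $s_{\pmb{m}'}(m_{1}) := s(m_{1}, m_{2}, \ldots, m_{n})$. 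Because the uniform measure on the $q^{T_{1}\cdots T_{n}}$ possible $s$ factors as the product of uniform measures on the $T_{2}\cdots T_{n}$ disjoint slices, the variables $L_{\pmb{m}'} := L(s_{\pmb{m}'})$ are i.i.d., valued in $[0, T_{1}]$, with common mean $\mu$; so Lemma~\ref{lemma:hoefding} immediately gives
\[
\mathcal{P}\Big(\sum_{\pmb{m}'} L_{\pmb{m}'} \ge (T_{2}\cdots T_{n})(\mu + \epsilon)\Big) \le \exp\bigl(-2\epsilon^{2}T_{2}\cdots T_{n}/T_{1}^{2}\bigr).
\]

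To convert this into the claimed bound on $L_{k}(s)$, I need the deterministic inequality $L_{k}(s) \le \sum_{\pmb{m}'} L_{\pmb{m}'}$; taking complements then closes the proof. For each $\pmb{m}'$, let $P_{\pmb{m}'}(X_{1}) \in \F_{q}[X_{1}]$ be the minimal annihilator of $s_{\pmb{m}'}$, of degree $L_{\pmb{m}'}$. Since each slice is recoverable from its first $L_{\pmb{m}'}$ entries via $P_{\pmb{m}'}$, the full sequence $s$ is determined by the $\sum_{\pmb{m}'} L_{\pmb{m}'}$ initial values $\{s(j_{1}, \pmb{m}') : 0 \le j_{1} < L_{\pmb{m}'}\}$. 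What remains is to turn this slice-wise encoding into an ideal of $\F_{q}[X_{1}, \ldots, X_{n}]$, annihilating either $s$ itself or a suitable $k$-modification of it, whose Delta set (in the sense of Section~\ref{sec:Lin com}) has cardinality at most $\sum_{\pmb{m}'} L_{\pmb{m}'}$.

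The main obstacle is this deterministic step. The obvious candidate is the product polynomial $\prod_{\pmb{m}'} P_{\pmb{m}'}(X_{1})$, which lies in $I(s)$ and has degree $\sum_{\pmb{m}'} L_{\pmb{m}'}$, but the box estimate on the Delta set it produces only yields $L(s) \le T_{2}\cdots T_{n}\cdot\deg\!\bigl(\mathrm{lcm}(P_{\pmb{m}'})\bigr)$, which is far too weak. The sharp bound must come either from adjoining mixed-variable generators in $X_{2}, \ldots, X_{n}$ that trim the Delta set column-by-column down to the individual slice heights $L_{\pmb{m}'}$, or from exploiting the $k$-error flexibility to flip up to $k$ entries of $s$ and thereby synchronize the slice-wise recurrences into a single ideal with a correspondingly small Delta set; everything else in the argument (independence of slices, the application of Hoeffding, and taking complements) is essentially routine once this inequality is in hand.
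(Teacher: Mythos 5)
You have reproduced the paper's proof exactly up to its one substantive step: the paper, too, slices $s$ along the first coordinate, notes that over the uniform choice of $s$ the slice complexities $L^{m_2,\ldots,m_n}(s)$ are i.i.d.\ random variables in $[0,T_1]$ with mean $\mu$, applies Lemma~\ref{lemma:hoefding} with $d=T_2\cdots T_n$, and takes complements. The entire remaining content of the paper's proof is the chain $L_k(s)\le L_0(s)=L(s)\le \sum_{m_2=0}^{T_2-1}\cdots\sum_{m_n=0}^{T_n-1}L^{m_2,\ldots,m_n}(s)$, which it asserts ``by definition'' with no further argument. The step you explicitly leave open---and for which you offer only candidate strategies (adjoining mixed-variable generators, or spending the $k$-error budget) rather than a proof---is therefore a genuine gap in your proposal: without that deterministic inequality nothing connects the Hoeffding bound to $L_k(s)$, and your closing paragraph is a list of hopes, not an argument.

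Your unease about that step is, however, well founded: the inequality $L(s)\le\sum L^{m_2,\ldots,m_n}(s)$ is false under the paper's definition of $L(s)$. Take $q=2$, $n=2$, $T_1=T_2=2$, and let $s$ have columns $s(\cdot,0)=1,0,1,0,\ldots$ and $s(\cdot,1)=1,1,1,\ldots$, so the slice complexities are $2$ and $1$, with sum $3$. Writing a general polynomial modulo $X_1^2-1$ and $X_2^2-1$ as $P=a+bX_1+cX_2+dX_1X_2$ and imposing $Ps=0$ at the four points of the fundamental domain forces $a=b=c=d=0$; hence $I(s)=(X_1^2-1,\,X_2^2-1)$ and $L(s)=4>3$. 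The flaw in the heuristic ``$s$ is determined by $\sum_{\pmb{m}'}L_{\pmb{m}'}$ initial values, hence $L(s)\le\sum_{\pmb{m}'}L_{\pmb{m}'}$'' is that $L(s)=\dim_{\F_q}\F_q[X_1,\ldots,X_n]/I(s)$ (equivalently, the dimension of the $\F_q$-span of the shifts of $s$) is not a minimum description length: a slice-wise recurrence has coefficients depending on $(m_2,\ldots,m_n)$ and therefore is not an element of $I(s)$. The only bound the slices yield uniformly is the one you correctly dismissed as too weak, $L(s)\le T_2\cdots T_n\cdot\deg\bigl(\lcm_{\pmb{m}'}P_{\pmb{m}'}\bigr)$, and the example above attains it. So your proposal cannot be completed along the proposed lines---and the comparison cuts both ways: the paper's own proof rests on exactly the same unproved (and, as shown, false) inequality, so neither your sketch nor the paper's argument establishes the theorem as stated. (The counterexample does not refute the probabilistic statement itself, only this deterministic step; a correct proof would need a different route.)
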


\begin{proof}
  This result is a direct consequence of Lemma~\ref{lemma:hoefding},
  taking as random variables the linear complexities of the one-dimensional sequences 
  obtained from $(T_1,\ldots,T_n)$-periodic sequences by  fixing all coordinates  but the first one.

  More precisely, we denote by $L^{m_2,\ldots, m_n}(s)$ the linear complexity of
  the one-dimensional sequence defined by
  $\tilde{s}(m) = s(m,m_2,\ldots, m_n)$, where $m_2,\ldots,m_n$ are fixed.
  By definition, the following inequality holds:  
  $$
   L_k(s) \le L_0(s)= L(s) \le  \sum_{m_2=0}^{T_2-1}\ldots
  \sum_{m_n=0}^{T_n-1}L^{m_2,\ldots, m_n}(s). 
  $$
  Then, it suffices to show that
  $$
  \mathcal{P}\left(  \sum_{m_2=0}^{T_2-1}\ldots
  \sum_{m_n=0}^{T_n-1}L^{m_2,\ldots, m_n}(s) \ge (\mu+\epsilon)T_2\cdots T_n  \right) \le
  \exp \left(-2\epsilon^2T_2\cdots T_n/T_1^2 \right).
  $$
  However, the inequality above follows directly from Lemma
\ref{lemma:hoefding} by noticing that
each $L^{m_2,\ldots, m_n}(s)$ takes values in the interval $[0,T_1]$
and has expected value $\mu$.
\end{proof}

We remark that
there is an available formula in~\cite[Theorem 1]{Meidl} for the
expected value of the linear complexity of one-dimensional periodic sequences.

\subsection{Lower bounds}
This subsection is devoted to prove
some lower bounds for the linear complexity of
multidimensional periodic sequences valid  with large probability.
Although our results are not as strong as those by Gyarmati et al. \cite[Theorems 1 and 3]{GMS2},
they still suggest that  the expected value of linear complexity shall be large. 

\begin{theorem}
  \label{thm:L(s)}
  For any $\epsilon_1,\epsilon_2>0$, there is a constant
  $C(\epsilon_1,\epsilon_2,n,q)$
  such that, if an $n$-tuple
  $(T_1,\ldots,T_n)$ of positive integers satisfies $T_1\cdots T_n > C$,
  then choosing each periodic sequence of period $(T_1,\ldots,T_n)$ $s:
  \N_0^n \to \F_q$ with equal probability $1/q^{T_1\cdots T_n}$, we have
  the probability
\begin{equation}
\label{eq:event1}
\mathcal{P} \left( L(s) > \sqrt{ (1-\epsilon_1)T_1\cdots T_n/(n-1)}
\right) > 1- \epsilon_2.
\end{equation}
\end{theorem}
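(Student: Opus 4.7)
The plan is a union-bound / counting argument. Writing $T = T_1\cdots T_n$ for brevity, I set the threshold $K=\lfloor\sqrt{(1-\epsilon_1)T/(n-1)}\rfloor$ and aim to show that the number of periodic sequences $s:\N_0^n\to\F_q$ of period $(T_1,\ldots,T_n)$ with $L(s)\le K$ is at most $\epsilon_2 q^T$ once $T$ exceeds a suitable constant $C(\epsilon_1,\epsilon_2,n,q)$. By \eqref{eq:LD} and the discussion following it, every such sequence is uniquely recovered from its ideal $I(s)$ together with the $|\Delta(s)|=L(s)$ initial values $\{s(\pmb{j}):\pmb{j}\in\Delta(s)\}$. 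So it suffices to bound, for each $0\le K'\le K$, the number $N(K')$ of ideals $I\subseteq\F_q[X_1,\ldots,X_n]$ with $\dim_{\F_q}\F_q[X_1,\ldots,X_n]/I = K'$, and then multiply by $q^{K'}$ to account for the choice of initial values.

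To bound $N(K')$, fix the graded lexicographic order and split the count according to the leading monomial ideal $\Lm(I)$: by Lemma~\ref{lem:monomial} there are at most $|M_n(K')|$ possibilities for $\Lm(I)$. For a fixed monomial ideal $J$, any ideal $I$ with $\Lm(I)=J$ is uniquely specified by its reduced Gr\"obner basis $\{g_1,\ldots,g_m\}$, where each $g_i$ is monic with leading monomial prescribed by $J$ and, because the order is graded and the basis is reduced, every other monomial of $g_i$ lies in the $K'$-element set $\Delta(J)$. Each $g_i$ thus has at most $q^{K'}$ possibilities, and Lemma~\ref{lem:coeff} gives $m\le (n-1)K'+1$, yielding
\[
N(K')\le |M_n(K')|\cdot q^{K'((n-1)K'+1)}.
\]
Multiplying by $q^{K'}$ for the initial values and summing over $0\le K'\le K$, the number of sequences with $L(s)\le K$ is at most $(K+1)\cdot|M_n(K)|\cdot q^{(n-1)K^2+2K}$; applying $|M_n(K)|\le cR^{K^{3/2}}$ from Lemma~\ref{lem:monomial} with any fixed $R>1$, this is at most
\[
(K+1)\,c\,R^{K^{3/2}}\,q^{(1-\epsilon_1)T+2K},
\]
by the choice of $K$.

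Since $K=O(\sqrt T)$ we have $K^{3/2}=O(T^{3/4})$ and $2K=O(T^{1/2})$, so the subexponential factor $(K+1)\,c\,R^{K^{3/2}}\,q^{2K}$ is of order $q^{o(T)}$ and is eventually dominated by $\epsilon_2 q^{\epsilon_1 T}$ once $T$ exceeds some constant $C(\epsilon_1,\epsilon_2,n,q)$. Dividing by the total count $q^T$ of periodic sequences then yields \eqref{eq:event1}. The main obstacle is the bookkeeping of parametrizing ideals via their reduced Gr\"obner bases and pinning down that the dominant exponent in the counting bound is $(n-1)K^2$; this exponent is exactly what forces the factor $1/(n-1)$ inside the square root in the theorem, and the slack $(1-\epsilon_1)$ is precisely what is needed to absorb the lower-order corrections $R^{K^{3/2}}$ and $q^{2K}$ for sufficiently large $T$.
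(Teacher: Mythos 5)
Your proposal is correct and follows essentially the same route as the paper's own proof: a union bound over complexity values $K'\le K$, with sequences counted via the reduced Gr\"obner basis of $I(s)$ under the graded lexicographic order, using Lemma~\ref{lem:monomial} (with $R$ a fixed constant, e.g.\ $R=q$) for the leading monomial ideal, Lemma~\ref{lem:coeff} for the basis size, $q^{K'}$ choices per basis polynomial from the Delta set, and $q^{K'}$ initial values, so that the dominant exponent $(n-1)K^2\le(1-\epsilon_1)T$ leaves the slack $\epsilon_1 T$ to absorb the $R^{K^{3/2}}$ and $q^{O(K)}$ factors. Your only cosmetic difference is phrasing the count as ``ideals times initial values'' rather than the paper's ``Gr\"obner bases times initial terms,'' which is the same bookkeeping.
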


\begin{proof}
For any integer $K \ge 0$, let $S_{T_1,\ldots,T_n}(K)$ be the set of periodic
  sequences  with period $(T_1,\ldots,T_n)$ and with linear complexity $K$. 
  For our purpose, we need to estimate the size of $S_{T_1,\ldots,T_n}(K)$. 

For any sequence $s \in S_{T_1,\ldots,T_n}(K)$, 
  let  $G(s)$ be the reduced Gr{\"o}bner  basis of the ideal $I(s)$ with
  respect to the graded lexicographic order of $\F_q[X_1,\ldots,X_n]$. 
By definition, we know that $s$ can be generated by $K$ initial terms and using the linear recurrences represented by the polynomials in $G(s)$. 
So, we have 
\begin{equation}  \label{eq:STK}
|S_{T_1,\ldots,T_n}(K)| \le q^K \cdot \big( \textrm{the total number of possible choices of $G(s)$} \big). 
\end{equation}  

We first estimate the possibilities of the leading monomials of $G(s)$. 
  Let $J(s)$ be the monomial ideal generated by the leading monomials of $G(s)$.  
   Note that the dimension of the quotient space $\F_q[X_1,\ldots,X_n]/J(s)$ over $\F_q$
  is exactly $L(s)$, that is, $K$.
  By Lemma \ref{lem:monomial}, the number of possibilities of the monomial ideal
  $J(s)$ is at most 
  \begin{equation}   \label{eq:ideal}
    cq^{K^{3/2}}, 
  \end{equation}
  which is also an upper bound for the number of possibilities of the leading monomials of $G(s)$.  
  
Now, fixing the leading monomials of $G(s)$, we count the possibilities of $G(s)$. 
Note that the Delta set $\Delta(s)$ is also fixed, and $|\Delta(s)|=L(s)=K$ as indicated in \eqref{eq:LD}. 
Moreover, for each polynomial in $G(s)$, its non-leading monomials are of the form $\pmb{X^j},\pmb{j} \in \Delta(s)$. 
So, noticing all the polynomials in $G(s)$ are monic, we have that the number of possibilities of each polynomial in $G(s)$ is at most $q^{|\Delta(s)|} = q^K$. 
Then, using Lemma~\ref{lem:coeff}, the number of possibilities of $G(s)$, with fixed leading monomials, is at most 
   \begin{equation}    \label{eq:coefficient}
     q^{K|G(s)|} \le q^{(n-1)K^2 + K}.
   \end{equation}
   
Hence,  combining \eqref{eq:ideal} with \eqref{eq:coefficient}, the total number
  of the possibilities of $G(s)$ is at most 
  \begin{equation*}   
    cq^{K^{3/2}}q^{(n-1)K^2 + K} = cq^{(n-1)K^2+K^{3/2}+K}, 
  \end{equation*}
which, together with \eqref{eq:STK}, implies that 
 \begin{equation}  \label{eq:STK2}
|S_{T_1,\ldots,T_n}(K)| \le cq^{(n-1)K^2+K^{3/2}+2K}.
\end{equation}  

Now, we are ready to prove the claimed probability. 
    Write
  $$
  H = \lfloor \sqrt{ (1-\epsilon_1)T_1\cdots T_n/(n-1)} \rfloor.
  $$
  If the event considered in \eqref{eq:event1} does not hold for some
  sequence $s$, then there is an integer $K \le H$ such that
  $L(s)=K$.
  Thus, using \eqref{eq:STK2} we deduce that
  \begin{align*}
    \mathcal{P}(L(s) \le H) & = \frac{1}{q^{T_1\cdots T_n}} \sum_{K=0}^{H} |S_{T_1,\ldots,T_n}(K)|    \\
                            & \le \frac{1}{q^{T_1\cdots T_n}} \sum_{K=0}^{H}  cq^{(n-1)K^2+K^{3/2}+2K}
                              \le cq^{(n-1)H^2+H^{3/2}+3H-T_1\cdots T_n}.
  \end{align*}
  So, for large enough $T_1\cdots T_n$, we have
  \begin{align*}
    \mathcal{P}\left( L(s) \le H \right) & \le cq^{((1-\epsilon_1)T_1\cdots T_n/(n-1))^{3/4}+3\sqrt{(1-\epsilon_1)T_1\cdots T_n/(n-1)}-\epsilon_1T_1\cdots T_n} \\
                                         & \le \epsilon_2.
  \end{align*}
This in fact completes the proof.
\end{proof}

Similarly, we can get a lower bound for the $k$-error linear
complexity of  multidimensional periodic sequences with large
probability.

\begin{theorem}
  \label{thm:Lk(s)}
  For any $\epsilon_1,\epsilon_2>0$, there are  numbers
  $\epsilon_3(\epsilon_1,q)$ and $C(\epsilon_1,
  \epsilon_2,\epsilon_3,n,q)$
  such that if an $n$-tuple $(T_1,\ldots,T_n)$ of positive integers satisfies $T_1\cdots T_n >C$
  and a non-negative integer $k$ satisfies $k<\epsilon_3 T_1\cdots T_n$,
  then choosing each periodic sequence of period $(T_1,\ldots,T_n)$ $s:
  \N_0^n \to \F_q$ with equal probability $1/q^{T_1\cdots T_n}$, we
  have the probability 
  \begin{equation*}
    \mathcal{P} \left( L_k(s) > \sqrt{ (1-\epsilon_1)T_1\cdots
        T_n/(n-1)} \right) > 1- \epsilon_2.
  \end{equation*}
\end{theorem}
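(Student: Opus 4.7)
The plan is to parallel the proof of Theorem \ref{thm:L(s)}, but to multiply the sequence-counting estimate by the number of ``error patterns'' modifying a sequence in at most $k$ positions inside a single period. Write $N = T_1 \cdots T_n$ and set $H = \lfloor \sqrt{(1-\epsilon_1)N/(n-1)} \rfloor$. The key observation is that $L_k(s) \le H$ if and only if there exists a periodic sequence $s'$ of period $(T_1,\ldots,T_n)$ with $L(s') \le H$ such that $s$ and $s'$ differ in at most $k$ positions inside the period box $\{0,\ldots,T_1-1\} \times \cdots \times \{0,\ldots,T_n-1\}$. Each such $s$ can therefore be encoded by a triple consisting of a low-complexity sequence $s'$, a choice of at most $k$ coordinates inside the period box, and the new values to put there.

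By a union bound, the number of sequences $s$ with $L_k(s) \le H$ is at most
$$\Big( \sum_{K=0}^{H} |S_{T_1,\ldots,T_n}(K)| \Big) \cdot \sum_{i=0}^{k} \binom{N}{i} (q-1)^i.$$
From the proof of Theorem \ref{thm:L(s)}, the first factor is at most $c\, q^{(n-1)H^2 + H^{3/2} + 3H} \le c\, q^{(1-\epsilon_1)N + O(N^{3/4})}$. For the second factor, writing $\delta = k/N < \epsilon_3$ and invoking the standard estimate $\binom{N}{k} \le (eN/k)^k$, we obtain
$$\sum_{i=0}^{k} \binom{N}{i} (q-1)^i \le (k+1)\Big(\frac{eN}{k}\Big)^k q^k = q^{\delta N (\log_q(e/\delta) + 1) + O(\log N)}.$$
Dividing the product by the total $q^N$ yields
$$\mathcal{P}(L_k(s) \le H) \le c\, q^{-\epsilon_1 N + \delta N (\log_q(e/\delta) + 1) + O(N^{3/4})}.$$

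Since $\delta \log_q(1/\delta) \to 0$ as $\delta \to 0^+$, we can choose $\epsilon_3 = \epsilon_3(\epsilon_1, q) > 0$ small enough that $\delta(\log_q(e/\delta) + 1) < \epsilon_1/2$ whenever $0 < \delta < \epsilon_3$; then for $N$ larger than a suitable constant $C = C(\epsilon_1, \epsilon_2, \epsilon_3, n, q)$, the exponent in the displayed bound is at most $-\epsilon_1 N/3$, so the probability is below $\epsilon_2$. The main obstacle (and the very reason for the hypothesis $k < \epsilon_3 N$) is exactly this balance: the union-bound cost of summing over $\sum_i \binom{N}{i} q^i$ possible error patterns must not overwhelm the relative saving $q^{-\epsilon_1 N}$ coming from the rarity of low-complexity sequences established in the previous theorem, and the entropy-type estimate $\delta \log_q(1/\delta) \to 0$ is what makes this possible precisely when $\delta$ is sufficiently small relative to $\epsilon_1$ and $q$.
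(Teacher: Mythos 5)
Your proposal is correct and follows essentially the same route as the paper: both decompose every sequence with $L_k(s)\le H$ into a low-complexity sequence (counted via the bound \eqref{eq:STK2} from the proof of Theorem~\ref{thm:L(s)}) times an error pattern of at most $k$ positions, and both absorb the error-pattern count with an entropy-type estimate made possible by $k<\epsilon_3 T_1\cdots T_n$. Your version is marginally more explicit than the paper's (you spell out the bound $\binom{N}{k}\le (eN/k)^k$ and use the slightly sharper count $\sum_{i\le k}\binom{N}{i}(q-1)^i$ where the paper simply asserts $q^k\binom{T_1\cdots T_n}{k}<2^{\epsilon_1 T_1\cdots T_n/2}$), but the argument is the same.
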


\begin{proof} 
Let 
$$
H = \lfloor \sqrt{ (1-\epsilon_1)T_1\cdots T_n/(n-1)} \rfloor.
$$
Denote by $W_{T_1,\ldots,T_n}(k,H)$ the set of $(T_1,\ldots,T_n)$-periodic
  sequences with $k$-error linear complexity at most $H$. 
 First we need to estimate the size of the set $W_{T_1,\ldots,T_n}(k,H)$. 

For two periodic sequences $s$ and $\sigma$ of period $(T_1,\ldots,T_n)$, define
$$
d(s,\sigma) = |\{\pmb{m}=(m_1,\ldots,m_n):\, s(\pmb{m})\ne
\sigma(\pmb{m}), 0\le m_i \le T_i-1, 1\le i \le n\}|.
$$
By definition, for a periodic sequence $s \in W_{T_1,\ldots,T_n}(H)$,
 there is another $(T_1,\ldots,T_n)$-periodic sequence $\sigma$ having the linear complexity
$$
L(\sigma) =L_k(s) \le H
$$
such that
\begin{equation}
\label{eq:distance}
d(s,\sigma)\le k.
\end{equation}
Conversely, if  a $(T_1,\ldots,T_n)$-periodic sequence $\sigma$ is fixed, then
a periodic sequence $s$ satisfying \eqref{eq:distance} can be obtained
from $\sigma$ by changing $\sigma(m_1,\ldots,m_n)$ for at most $k$ of
the $T_1\cdots T_n$ tuples $(m_1,\ldots,m_n),0\le m_i \le T_i-1, 1\le
i \le n$. This yields at most $q^k\binom{T_1\cdots T_n}{k}$ 
sequences  from each $\sigma$.
So, we obtain 
\begin{equation}
\label{eq:num-Lk}
\begin{split}
& |W_{T_1,\ldots,T_n}(k,H)| \\
& \quad \le |\{\textrm{ sequence $\sigma$ of period $(T_1,\ldots,T_n)$\,:\, $L(\sigma)\le H$ }\}| \cdot q^k\binom{T_1\cdots T_n}{k}.
\end{split}
\end{equation}
Moreover,  if $\epsilon_3$ is small enough in terms of $\epsilon_1$ and $q$, and
$T_1\cdots T_n$ is large enough in terms of $\epsilon_1,
\epsilon_3$ and $q$, then it follows from $k<\epsilon_3 T_1\cdots T_n$ that
\begin{equation}   \label{eq:binom}
q^k\binom{T_1\cdots T_n}{k} < 2^{\epsilon_1T_1\cdots T_n /2}.
\end{equation}

Then, combining \eqref{eq:num-Lk} with \eqref{eq:STK2} and \eqref{eq:binom}, we obtain
\begin{align*}
 |W_{T_1,\ldots,T_n}(k,H)|  & \le \sum_{K=0}^{H} 2^{\epsilon_1T_1\cdots T_n /2} |S_{T_1,\ldots,T_n}(K)|  \\
& \le 2^{\epsilon_1T_1\cdots T_n /2}\sum_{K=0}^{H} cq^{(n-1)K^2+K^{3/2}+2K}  \\
& \le cq^{(n-1)H^2 + H^{3/2} + 3H + \epsilon_1T_1\cdots T_n /2},
\end{align*}
where $c$ is some absolute constant depending on $n, q$.  
Therefore, for  $T_1\cdots T_n$ large enough, we have 
\begin{align*}
\mathcal{P} \big( L_k(s) \le H \big) & =  \frac{|W_{T_1,\ldots,T_n}(k,H)|}{q^{T_1 \cdots T_n}}  \\ 
&  \le cq^{((1-\epsilon_1)T_1\cdots T_n/(n-1))^{3/4}+3\sqrt{(1-\epsilon_1)T_1\cdots
                                         T_n} -\epsilon_1T_1\cdots T_n/2}   \\
  &  \le \epsilon_2,
\end{align*}
which in fact completes the proof.
\end{proof}

\section*{Acknowledgements}

The first and second authors are grateful to the Centre International de Rencontres Math{\'e}matiques in Luminy for hosting them
in a Research in Pairs program, where this paper was initiated.

D. G{\'o}mez-P{\'e}rez was partially supported
by the project MTM2014-55421-P from the Spanish Ministerio de Economia y Competitividad.  
The research of M. Sha was supported by the Australian Research Council Grant DP130100237,
and he was also supported by the Macquarie University Research Fellowship.



\begin{thebibliography}{1}


\bibitem{BMT}
 S. Blake, O. Moreno, and A. Z. Tirkel, ``Families of 3d arrays for video watermarking", in \textit{Sequences and Their Applications-SETA 2014},
Springer, 2014, 134--145.

\bibitem{Cox}
D. A. Cox, J. Little, and D. O'Shea, \textit{Ideals, varieties, and algorithms:
an introduction to computational algebraic geometry and commutative
algebra}, Springer Science \& Business Media, New York, 2007.


\bibitem{Ding}
 C. Ding, G. Xiao, and W. Shan, \textit{The stability theory of stream ciphers}, Lecture Notes in Computer Science \textbf{561}, Springer-Verlag, Berlin-Heidelberg, New York, 1991.

\bibitem{EPSW}
 G. Everest, A. van der Poorten, I. Shparlinski, and T. Ward, \textit{Recurrence sequences}, American Mathematical Society, Providence, 2003.


\bibitem{GH}
J. Goldman, and S. Homer, ``Doubly-periodic sequences and a class of two-dimensional cyclic codes", \textit{Adv. Appl. Math.}  \textbf{13} (1992), 48--61.


\bibitem{GHMR}
D. G\'omez-P\'erez, T. H{\o}holdt, O. Moreno,  and I. Rubio, ``Linear complexity for multidimensional arrays -- a numerical invariant", in \textit{Proceedings of the IEEE International Symposium on Information Theory (ISIT 2015)},  2697--2701.

\bibitem{GMS1}
K. Gyarmati, C. Mauduit, and A. S{\'a}rk{\"o}zy,
``On the linear complexity of binary lattices",
\textit{Ramanujan J.} \textbf{32} (2013),  185--201.

\bibitem{GMS2}
K. Gyarmati, C. Mauduit, and A. S{\'a}rk{\"o}zy,
``On the linear complexity of binary lattices II", \textit{Ramanujan J.} \textbf{34} (2014),  237--263.

\bibitem{Hoeffding}
W. Hoeffding, ``Probability inequalities for sums of bounded random variables",  \textit{JASA} \textbf{58} (301) (1963), 13--30.



\bibitem{Meidl}
W. Meidl, and H. Niederreiter, ``On the expected value of the linear complexity and the k-error linear complexity of periodic sequences",
\textit{IEEE Trans. Inform. Theory} \textbf{48} (2002),  2817--2825.


\bibitem{MNV}
W. Meidl, H. Niederreiter and A. Venkateswarlu, ``Error linear complexity measures for multisequences", \textit{J. Complexity} \textbf{23} (2007), 169--192.

\bibitem{MT0}
O. Moreno and A. Z. Tirkel, ``Digital watermarking",  Australian Patent PCT/AU210/000 990, 2010.

\bibitem{MT1}
 O. Moreno and A. Z. Tirkel, ``New optimal low correlation sequences for
wireless communications", in \textit{Sequences and Their Applications-SETA 2012}, Springer, 2012,  212--223.



\bibitem{Sakata0}
S. Sakata, ``General theory of doubly periodic arrays over an arbitrary finite field and its applications", \textit{IEEE Trans. Inform. Theory} \textbf{24} (1978),  719--730.



\bibitem{Schmidt}
W. M. Schmidt, ``Linear recurrence sequences", in  \textit{Diophantine approximation (Cetraro, 2000)}, Lecture Notes in Math. \textbf{1819}, Springer, Berlin, 2003,  171--247.



\bibitem{SM}
M. Stamp and C. F. Martin, ``An algorithm for the k-error linear complexity of binary
sequences with period $2^n$", \textit{IEEE Trans. Inform. Theory} \textbf{39} (1993), 1398--1401.

\bibitem{TH}
A. Z. Tirkel and T. E. Hall, ``Matrix construction using cyclic shifts of a
column'', in \textit{Proceedings of the IEEE International Symposium on Information Theory (ISIT 2005)}, 2050--2054.

\end{thebibliography}
\end{document}